\newcommand{\set}[2]{\left\{#1: \, #2\right\}}
\newcommand{\norm}[2][]{\left\|#2\right\|}   
\newcommand{\sclp}[2][]{\langle#2\rangle} 
\newcommand{\N}{\mathbb{N}}
\newcommand{\R}{\mathbb{R}}
\newcommand{\C}{\mathbb{C}}
\newcommand{\eps}{\varepsilon}
\newcommand{\ii}{\mathrm{i}}
\newcommand{\e}{\mathrm{e}}
\newcommand{\dom}{D} 
\newcommand{\tr}{\mathrm{tr}} 
\newcommand{\dd}{\, \mathrm{d}}
\newcounter{satz}[section]
\renewcommand{\thesatz}{\arabic{section}.\arabic{satz}}
\newenvironment{theorem}[1][]
{\bigskip\noindent\refstepcounter{satz}\textsc{Theorem~\thesatz}\ifthenelse{\equal{#1}{}}{.}{~(\textit{#1}):} \, \it}
{\bigskip}
\newenvironment{proposition}[1][]
{\bigskip\noindent\refstepcounter{satz}\textsc{Proposition~\thesatz}\ifthenelse{\equal{#1}{}}{.}{~(\textit{#1}):} \, \it}
{\bigskip}
\newenvironment{corollary}[1][]
{\bigskip\noindent\refstepcounter{satz}\textsc{Corollary~\thesatz}\ifthenelse{\equal{#1}{}}{.}{~(\textit{#1}):} \, \it}
{\bigskip}
\newenvironment{remark}
{\bigskip\noindent\refstepcounter{satz}\textsc{Remark~\thesatz}.\,}
{\bigskip}
\begin{document}
\title[Weyl asymptotics]{Weyl type asymptotics and bounds for the eigenvalues of functional-difference operators for mirror curves}
\author{Ari Laptev}
\address{Ari Laptev, Department of Mathematics\\
Imperial College London, SW7 2AZ, London, UK \\
Institut Mittag-Leffler, Djursholm, Sweden}
\email{a.laptev@imperial.ac.uk}

\author{Lukas Schimmer}
\address{Lukas Schimmer, Department of Physics, Princeton University, Princeton, NJ 08544, USA}
\email{lukass@princeton.edu}

\author{Leon A. Takhtajan}
\address{Leon A. Takhtajan, Department of Mathematics \\
Stony Brook University\\ Stony Brook, NY 11794-3651 \\ USA; Euler Mathematical Institute, Saint Petersburg, Russia}
\email{leontak@math.stonybrook.edu}
\date{}

\begin{abstract} We investigate  Weyl type asymptotics of functional-difference operators associated to mirror curves of special del Pezzo Calabi-Yau threefolds. These operators are 
$H(\zeta)=U+U^{-1}+V+\zeta V^{-1}$ and $H_{m,n}=U+V+q^{-mn}U^{-m}V^{-n}$, where $U$ and $V$ are self-adjoint Weyl operators satisfying $UV=q^{2}VU$ with $q=\e^{\ii\pi b^{2}}$, $b>0$ and $\zeta>0$, $m,n\in\N$. We prove that $H(\zeta)$ and  $H_{m,n}$ are self-adjoint operators with purely discrete spectrum on $L^{2}(\R)$. Using the coherent state transform we find the asymptotical behaviour for the Riesz mean $\sum_{j\ge 1}(\lambda-\lambda_{j})_{+}$ as $\lambda\to\infty$ and prove the Weyl law for the eigenvalue counting function $N(\lambda)$ for these operators, which imply that their inverses are of trace class.
\end{abstract}

\maketitle

\section{Introduction}\label{sec:intro}
Let $P$ and $Q$ be quantum-mechanical momentum and position operators on $L^{2}(\R)$, satisfying on their common domain the Heisenberg commutation relation $[P,Q]=\ii I$. Consider the corresponding Weyl operators $U=\e^{-b P}$ and $V=\e^{2\pi b Q}$, where $b>0$. The operators $U$ and $V$ are 
unbounded self-adjoint operators on $L^{2}(\R)$, satisfying on their common domain the Weyl relation 
$$UV=q^{2}VU,$$
where $q=\e^{\ii\pi b^{2}}$.
In the coordinate representation $(P\psi)(x)=\ii\psi'(x)$ and $(Q\psi)(x)=x\psi(x)$, and the Weyl operators have the form $(U\psi)(x)=\psi(x+\ii b)$ and $(V\psi)(x)=\e^{2\pi b x}\psi(x)$. Their respective domains are
\begin{align*}
\dom(U)&=\set{\psi\in L^2(\R)}{\e^{-2\pi b k}\widehat{\psi}(k)\in L^2(\R)},\\
\dom(V)&=\set{\psi\in L^2(\R)}{\e^{2\pi b x}\psi(x)\in L^2(\R)},
\end{align*}
where $\mathcal{F}$ is the Fourier transform 
\begin{align*}
\widehat{\psi}(k)=(\mathcal{F}\psi)(k)=\int_\R\e^{-2\pi\ii kx}\psi(x)\dd x
\end{align*}
on $L^2(\R)$. Equivalently, $\dom(U)$ consists of those functions $\psi(x)$ which admit an analytic continuation to the strip $\set{z = x+\ii y\in\C}{0<y <b}$ such that $\psi(x +\ii y) \in L^2(\R)$ for all $0\leq y < b$ and there is a limit $\psi(x +\ii b - \ii 0) = \lim_{\eps\to 0^+}\psi (x + \ii b - \ii\eps)$ in the sense of convergence in $L^2(\R)$, which we will denote simply by $\psi(x+\ii b)$. The domain of $U^{-1}$ can be characterized similarly. 

Using the Weyl operators $U$ and $V$, one constructs the operator 
$$H=U+U^{-1}+V,$$
which in the coordinate representation becomes a functional-difference operator 
$$(H\psi)(x)=\psi(x+\ii b) + \psi(x-\ii b) + \e^{2\pi b x}\psi(x).$$
The operator $H$ first appeared in the study of the quantum Liouville model on the lattice \cite{Faddeev1986} and plays an important role in the representation theory of the non-compact quantum group $\mathrm{SL}_{q}(2,\R)$. In the momentum representation it becomes the Dehn twist operator in quantum Teichm\"{u}ller theory \cite{Kashaev2001}. In particular, in \cite{Kashaev2001} the eigenfunction expansion theorem for $H$ in the momentum representation was stated as formal completeness and orthogonality relations in the sense of distributions.

The spectral analysis of the functional-difference operator $H$ was done in \cite{Faddeev2014}. The operator $H$ was shown to be self-adjoint with a simple absolutely continuous spectrum $[2,\infty)$, and the eigenfunction expansion theorem for $H$, generalizing the classical Kontorovich-Lebedev transform, was proved. 

It was discovered in \cite{ADKMV} that the functional-difference operators built from the Weyl operators $U$ and $V$, also appear in the study of local mirror symmetry as a  quantization of an algebraic curve, the mirror to a toric Calabi-Yau threefold. The spectral properties of these operators were considered in \cite{GHM}. The typical example is a so-called local del Pezzo Calabi-Yau threefold, a total space of the anti-canonical bundle on a toric del Pezzo surface $S$. In the simplest case of the Hirzebruch surface $S=\mathbb{P}^{1}\times\mathbb{P}^{1}$ one gets the following operator
\begin{align} \label{eq:type1} 
H(\zeta)=\e^{-bP}+\e^{bP}+\e^{2\pi bQ}+\zeta\e^{-2\pi b Q}=U+U^{-1}+V+\zeta V^{-1}\,,
\end{align}
where $\zeta>0$ is a ``mass'' parameter, so that $H=H(0)$. In case $S$ is a weighted projective space $\mathbb{P}(1,m,n)$,  $m,n\in\N$, the corresponding operator is 
\begin{align} \label{eq:type2} 
H_{m,n}=\e^{-bP}+\e^{2\pi b Q}+\e^{bmP-2\pi bnQ}=U+V+q^{-mn}U^{-m}V^{-n},
\end{align}
and $H=H_{1,0}$ (see \cite{GHM} for details). It was conjectured in \cite{GHM} for the cases $\zeta>0$ and $m,n\in\N$ that these operators have a discrete spectrum, their inverses are of trace class and their Fredholm determinants can be explicitly evaluated in terms of enumerative invariants of the underlying Calabi-Yau threefolds. In a recent paper \cite{Kashaev2015} some of these conjectures were proved and the authors obtained a remarkable explicit formula for the operators $H(\zeta)^{-1}$ and $H^{-1}_{m,n}$ in terms of the modular quantum dilogarithm.

The present paper is devoted to the study of  Weyl type asymptotics for the operators $H(\zeta)$ and $H_{m,n}$ as self-adjoint operators on $L^{2}(\R)$.
Namely, we prove that they are operators with purely discrete spectrum and investigate the asymptotic behavior of their eigenvalues, from which it immediately follows that $H(\zeta)^{-1}$ and $H_{m,n}^{-1}$ are of trace class. Our main results are Theorems \ref{th:Hasymp} and \ref{th:Gasymp} on the asymptotic behaviour of the Riesz mean $\sum_{j\ge 1}(\lambda-\lambda_{j})_{+}$ and Corollaries \ref{cor:Hasymp} and \ref{cor:Gasymp} on the Weyl law for the eigenvalue counting function $N(\lambda)$ for these operators. Namely,
\begin{equation} \label{W-zeta}
\lim_{\lambda\rightarrow\infty}\frac{N(\lambda)}{\log^{2}\lambda}=\frac{1}{(\pi b)^{2}}
\end{equation}
for the operator $H(\zeta)$ and
\begin{equation} \label{W-mn}
\lim_{\lambda\rightarrow\infty}\frac{N(\lambda)}{\log^{2}\lambda}=\frac{c_{m,n}}{(2\pi b)^{2}},\quad c_{m,n}=\frac{(m+n+1)^{2}}{2mn}
\end{equation}
for the operator $H_{m,n}$. The proof follows ideas developed in \cite{Laptev1997}, where the Fourier transform is replaced by the coherent state transform. The applied methods also mimic the derivation of the Berezin--Lieb inequality \cite{Berezin1972,Berezin1972b,Lieb1973}.

\subsection*{Acknowledgements} The work of L.S.~  was supported by the NSF grant PHY-1265118 at Princeton University. The work of L.T. was partially supported by the NSF grant DMS-1005769. A.L. is grateful to T. Weidl for useful discussions. Some results of the paper were obtained by using different methods by A.L.'s master student O. Mickelin \cite{Mickelin2015}. The research was supported by the Russian Foundation for Basic Research (grant no. 12-01-00203).

\section{The Operator $H(\zeta)$}\label{sec:H}

Let $H_0=U+U^{-1}$ and $W(\zeta)=V+\zeta V^{-1}$ so that $H(\zeta)=H_0+W(\zeta)$ and formally
\begin{align*}
\big(H(\zeta)\psi\big)(x)=\psi(x+\ii b)+\psi(x-\ii b)+(\e^{2\pi b x}+\zeta\e^{-2\pi bx})\psi(x)\,.
\end{align*}
It is straightforward to show that $\mathcal{F}H_0\mathcal{F}^{-1}=W$, where we put $W=W(1)$, which yields $\sigma(H_0)=[2,\infty)$ and consequently $H\ge2I$. The operator $H(\zeta)$ is semi-bounded and symmetric on the common domain of $H_{0}$ and $W(\zeta)$, 
$$\langle H(\zeta)\psi,\psi\rangle=\langle\psi,H(\zeta)\psi\rangle\ge 2\Vert\psi\Vert^{2},$$
where $\langle~,~\rangle$ stands for the inner product in $L^{2}(\R)$. Thus 
we can define a self-adjoint Friedrichs extension of the operator $H(\zeta)$ (see e.g. \cite[Chapter 10.3]{Birman1987}). It is this extension that we mean when we refer to the operator $H(\zeta)$. 
We first show that the spectrum of $H(\zeta)$ is purely discrete. 

\begin{proposition}\label{prop:Hspec}
Let $L(x)$ be a continuous, real-valued, bounded below function such that $L(x)$ tends to $+\infty$ as $|x|\rightarrow\infty$.
Then the operator $T=H_{0}+L$ has purely discrete spectrum consisting of finite multiplicity eigenvalues tending to $+\infty$.
\end{proposition}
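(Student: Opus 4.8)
The plan is to show that $T = H_0 + L$ has compact resolvent by establishing that the form domain of $T$ embeds compactly into $L^2(\R)$. Since $H_0 = U + U^{-1} \ge 0$ and $L$ is bounded below, $T$ is semi-bounded and we may define it as a Friedrichs extension; without loss of generality, after adding a constant, assume $H_0 \ge 0$ and $L \ge 1$. The quadratic form of $T$ is $t[\psi] = \langle H_0\psi,\psi\rangle + \int_\R L(x)|\psi(x)|^2\,\dd x$ on the form domain $\mathcal{Q}(T) = \mathcal{Q}(H_0) \cap \mathcal{Q}(L)$. The standard criterion (see e.g.\ \cite[Chapter 10.3]{Birman1987}) is: if the set $\{\psi : t[\psi] \le 1\}$ is precompact in $L^2(\R)$, then $T$ has purely discrete spectrum with eigenvalues of finite multiplicity accumulating only at $+\infty$.

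The first step is to understand the form $\langle H_0\psi,\psi\rangle$. Conjugating by the Fourier transform, $\mathcal{F}H_0\mathcal{F}^{-1} = W = V + V^{-1}$, which acts as multiplication by $2\cosh(2\pi b x)$. Hence $\langle H_0\psi,\psi\rangle = \int_\R 2\cosh(2\pi b k)\,|\widehat\psi(k)|^2\,\dd k$, so that $\mathcal{Q}(H_0)$ consists of those $\psi$ whose Fourier transform decays fast enough that $\cosh(2\pi b k)^{1/2}\widehat\psi(k) \in L^2$. In particular, control of $t[\psi]$ gives simultaneously a weight $\cosh(2\pi b k)$ in the Fourier variable and a weight $L(x) \to \infty$ in the spatial variable. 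The second step is a standard precompactness argument: suppose $t[\psi_n] \le 1$ and $\psi_n \rightharpoonup \psi$ weakly in $L^2$; we must show $\psi_n \to \psi$ strongly. Split $\|\psi_n - \psi\|^2$ into the contributions from $|x| \le R$ and $|x| > R$ in space and from $|k| \le K$ and $|k| > K$ in frequency. The tail $|x| > R$ is controlled by $\int_{|x|>R} L(x)|\psi_n-\psi|^2 / \inf_{|x|>R}L(x)$, which is small uniformly in $n$ since $L \to \infty$; similarly the tail $|k| > K$ is controlled by the $\cosh$ weight. On the compact region $|x| \le R$, $|k| \le K$ — i.e.\ after applying a smooth spatial cutoff and a smooth frequency cutoff — the operator is Hilbert–Schmidt (its kernel is an $L^2$ function of two variables because it is a compactly supported, bounded function times a smoothing Fourier multiplier), hence compact, so it maps the weakly convergent $\psi_n - \psi$ to a strongly convergent sequence. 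Combining the three pieces gives $\psi_n \to \psi$ in $L^2$, proving precompactness.

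The main obstacle is the bookkeeping in the cutoff argument: $H_0$ is not a local operator (it is a finite-difference operator with imaginary step, equivalently an exponentially growing Fourier multiplier), so one cannot naively use an IMS-type localization formula as one would for $-\Delta + L$. The clean way around this is to work entirely with the two weights $\cosh(2\pi bk)$ (in frequency) and $L(x)$ (in space): a double cutoff $\chi_{\{|x|\le R\}}\cdot\mathcal{F}^{-1}\chi_{\{|k|\le K\}}\mathcal{F}$ has Hilbert–Schmidt norm that can be estimated directly, and the commutator errors introduced by smoothing the cutoffs are lower order and absorbed by the weights. Once precompactness of $\{t[\psi]\le 1\}$ is in hand, the conclusion — purely discrete spectrum, finite multiplicities, eigenvalues $\to +\infty$ — is immediate from the variational (min-max) characterization of the spectrum of a semi-bounded self-adjoint operator with compact resolvent.
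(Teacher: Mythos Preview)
Your proof is correct but proceeds by a different route than the paper's. The paper uses the Birman--Schwinger principle: for each fixed $\lambda$, the number of eigenvalues of $T$ below $\lambda$ is bounded by the number of eigenvalues exceeding $1$ of the operator $K_\lambda=W_\lambda H_0^{-1}W_\lambda$, where $W_\lambda=\sqrt{(\lambda-L)_+}$. Since $L(x)\to\infty$, the multiplier $W_\lambda$ has compact support, and since $H_0^{-1}$ has the explicit integral kernel $\int_\R \e^{2\pi\ii(x-y)k}/(2\cosh 2\pi bk)\dd k$, the operator $K_\lambda$ is manifestly compact; hence the count is finite for every $\lambda$. Your argument instead shows compact resolvent directly via a Rellich-type compactness: bounded form energy forces simultaneous decay in position (through the weight $L\to\infty$) and in frequency (through the Fourier symbol $2\cosh(2\pi bk)\to\infty$), and the double cutoff $\chi_{\{|x|\le R\}}\,\mathcal F^{-1}\chi_{\{|k|\le K\}}\,\mathcal F$ is Hilbert--Schmidt. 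Your approach is slightly more robust, since it only needs that the symbol of $H_0$ tends to infinity rather than an explicit resolvent kernel; the paper's is shorter and avoids any tail bookkeeping. One minor remark: your concern about ``commutator errors'' from smoothing the cutoffs is unnecessary --- sharp characteristic-function cutoffs already yield the decomposition $\psi=(1-\chi_R)\psi+\chi_R(I-P_K)\psi+\chi_R P_K\psi$ with no commutators, and $\chi_R P_K$ is Hilbert--Schmidt with norm $2\sqrt{RK}$.
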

\begin{proof}
\noindent
Indeed, by using the variational principle and the Birman--Schwinger principle we have
\begin{align*} 
&\dim\set{\psi}{\sclp{T\psi,\psi} < \lambda \sclp{\psi,\psi}}\\
&\le \dim\set{\psi}{\sclp{H_{0 }\psi, \psi} - \left(\lambda\norm{\psi}^2 - \sclp{L\psi,\psi}\right)_+ <0}\\
&= 
\dim\set{\psi}{\sclp{W_\lambda H_{0}^{-1} W_\lambda \psi,\psi} > 1},
\end{align*} 
where $W_\lambda= \sqrt{(\lambda-L)_+}$. 
The operator $K_\lambda =W_\lambda H_{0}^{-1} W_\lambda$ is an integral operator
\begin{align*}
(K_\lambda\psi)(x)= \int_{-\infty}^\infty\int_{-\infty}^\infty W_\lambda(x)\frac{\e^{2\pi \ii(x-y)k}}{2\cosh( 2\pi b k)} W_\lambda(y) \psi(y) \dd k \dd y.
\end{align*}
Since $L(x)$ tends to $+\infty$ as $|x|\to \infty$, the support of  $W_\lambda$ is compact. 
Therefore $K_\lambda$ is a compact operator and this proves that its spectrum above one is finite. This implies that the spectrum of $T$ below $\lambda$ is also finite for any fixed $\lambda>0$.

Clearly $T$ cannot have finite rank since it is the sum of two unbounded positive operators. Therefore the spectrum of the operator $T$ is discrete.
\end{proof}

Let $\lambda_1\le\lambda_2\le\dots$ denote the eigenvalues of $H(\zeta)$ with the corresponding complete system of orthonormal eigenfunctions $\psi_j\in L^2(\R)$.
We are interested in the asymptotic behaviour of the Riesz mean $\sum_{j\ge1}(\lambda-\lambda_j)_+$ as $\lambda\to\infty$. Here $x_+=(|x|+x)/2$ is defined as the positive part of a real number $x$. Our main result is the following. 

\begin{theorem}\label{th:Hasymp}
For any $\zeta>0$ the eigenvalues $\lambda_j$ of the operator $H(\zeta)$ have the following asymptotic behaviour
\begin{align}
\sum_{j\ge1}(\lambda-\lambda_j)_+=\frac{\lambda\log^{2}\lambda}{(\pi b)^2}+O(\lambda\log\lambda)\quad\text{as}\quad\lambda\to\infty.
\label{eq:Hasymp} 
\end{align}
\end{theorem}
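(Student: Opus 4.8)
The plan is to obtain matching upper and lower bounds for the Riesz mean $R(\lambda)=\sum_{j\ge1}(\lambda-\lambda_j)_+$ using the coherent state transform, mimicking the Fourier-transform argument of \cite{Laptev1997} and the Berezin--Lieb inequalities. Recall that the family of coherent states $\phi_{p,q}(x)=\pi^{-1/4}\e^{\ii p x}\e^{-(x-q)^2/2}$ (or an analogous normalized Gaussian wave packet, possibly rescaled by a parameter $h$ to be optimized at the end) furnishes a resolution of the identity $\frac{1}{2\pi}\int_{\R^2}|\phi_{p,q}\rangle\langle\phi_{p,q}|\dd p\dd q=I$ on $L^2(\R)$. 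The key analytic input is the computation of the coherent-state symbol of $H(\zeta)$: since $U,U^{-1}$ act by imaginary shifts and $V,\zeta V^{-1}$ by multiplication, one finds that $\langle\phi_{p,q},H(\zeta)\phi_{p,q}\rangle$ equals, up to an exponentially small and explicitly controlled Gaussian smearing, the classical symbol $\mathcal{H}(p,q)=2\cosh(bp)+\e^{2\pi b q}+\zeta\e^{-2\pi b q}$ (the $\cosh$ term coming from the Fourier side, i.e.\ from $H_0=U+U^{-1}$ whose conjugate by $\mathcal{F}$ is $2\cosh(2\pi b\cdot)$, after suitably matching conventions). Concretely, $\langle\phi_{p,q},H_0\phi_{p,q}\rangle=2\e^{b^2/4}\cosh(bp)$ and $\langle\phi_{p,q},W(\zeta)\phi_{p,q}\rangle=\e^{\pi^2 b^2}(\e^{2\pi b q}+\zeta\e^{-2\pi b q})$, so the symbol is a fixed bounded perturbation of $\mathcal{H}$.

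For the upper bound I would use the Berezin--Lieb inequality in the form $\tr(\lambda-H(\zeta))_+\le\frac{1}{2\pi}\int_{\R^2}\big(\lambda-\langle\phi_{p,q},H(\zeta)\phi_{p,q}\rangle\big)_+\dd p\dd q$, valid because $t\mapsto(\lambda-t)_+$ is convex. For the lower bound I would run the variational principle with the trial density operator built from coherent states supported where the symbol is below $\lambda$, namely $\gamma=\frac{1}{2\pi}\int_{\{\mathcal H<\lambda\}}|\phi_{p,q}\rangle\langle\phi_{p,q}|\dd p\dd q$; then $R(\lambda)\ge\tr((\lambda-H(\zeta))\gamma)=\frac{1}{2\pi}\int(\lambda-\langle\phi_{p,q},H(\zeta)\phi_{p,q}\rangle)_+\dd p\dd q$ again up to controlling $0\le\gamma\le I$ (which follows from the resolution of identity) and the smearing error. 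Thus both bounds reduce to evaluating the phase-space integral
\begin{align*}
\Phi(\lambda)=\frac{1}{2\pi}\int_{\R^2}\big(\lambda-2\cosh(bp)-\e^{2\pi b q}-\zeta\e^{-2\pi b q}\big)_+\dd p\dd q,
\end{align*}
with the errors from replacing the true symbol by $\mathcal H$ being $O(\lambda\log\lambda)$ since the symbol differs from $\mathcal H$ by a bounded amount on a region of $(p,q)$-area $O(\log^2\lambda)$.

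The phase-space integral is then a direct but careful computation: changing variables $u=bp$, $v=2\pi b q$ gives $\Phi(\lambda)=\frac{1}{(2\pi)(2\pi b^2)}\int\int(\lambda-2\cosh u-\e^v-\zeta\e^{-v})_+\dd u\dd v$. The region of integration is bounded by $|u|\lesssim\log\lambda$ and $v$ ranging over an interval of length $\sim2\log\lambda$ (from $\e^v<\lambda$ and $\e^{-v}<\lambda/\zeta$), and on the bulk of this region the integrand is $\approx\lambda$, giving leading term $\frac{1}{(2\pi)(2\pi b^2)}\cdot\lambda\cdot(2\log\lambda)\cdot(2\log\lambda)=\frac{\lambda\log^2\lambda}{\pi^2 b^2}$; the contributions of the $\cosh u$ and $\e^{\pm v}$ terms, as well as the rounding near the boundary of the region, contribute only $O(\lambda\log\lambda)$, and the $\zeta$-dependence enters only through such lower-order terms, which is why the answer is independent of $\zeta$. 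I expect the main obstacle to be the careful bookkeeping of these error terms: one must verify that the coherent-state smearing error, the difference between the symbol and $\mathcal H$, and the boundary-layer contribution to $\Phi(\lambda)$ are all genuinely $O(\lambda\log\lambda)$ and not, say, $O(\lambda\log^2\lambda)$ with a nonzero constant — in particular the smearing of $\e^{2\pi b q}$ produces a multiplicative constant $\e^{\pi^2 b^2}$ that shifts the effective threshold in $v$ by an $O(1)$ amount, and one must check this only perturbs $\Phi$ at order $\lambda\log\lambda$. Once \eqref{eq:Hasymp} is established, the Weyl law \eqref{W-zeta} for $N(\lambda)$ follows by a standard Tauberian/Abelian comparison argument (differencing the Riesz mean).
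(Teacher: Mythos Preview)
Your overall strategy---coherent states plus Berezin--Lieb type inequalities, reducing both bounds to the same phase-space integral---is exactly what the paper does, and your asymptotic analysis of the phase-space integral is correct. However, there is a genuine gap in the upper bound.

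You claim
\[
\tr(\lambda-H(\zeta))_+\le\frac{1}{2\pi}\int_{\R^2}\big(\lambda-\langle\phi_{p,q},H(\zeta)\phi_{p,q}\rangle\big)_+\dd p\dd q
\]
``valid because $t\mapsto(\lambda-t)_+$ is convex''. But convexity together with the upper (Wick) symbol $\langle\phi_{p,q},H\phi_{p,q}\rangle$ gives the \emph{opposite} inequality: writing $\tr f(H)=\int\langle\phi_{p,q},f(H)\phi_{p,q}\rangle\,\dd\mu$ and applying Jensen to the spectral measure yields $\tr f(H)\ge\int f(\langle\phi_{p,q},H\phi_{p,q}\rangle)\,\dd\mu$ for convex $f$. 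In fact your upper and lower bounds, as written, are the \emph{same} inequality (both are lower bounds), which should have been a warning sign. To obtain an upper bound one needs the \emph{lower} (anti-Wick) symbol $h$, i.e.\ a representation $H=\int h(p,q)\,|\phi_{p,q}\rangle\langle\phi_{p,q}|\,\dd\mu$, after which Jensen applied the other way gives $\tr(\lambda-H)_+\le\int(\lambda-h)_+\,\dd\mu$.

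The paper carries this out explicitly: the identity $\langle H\psi,\psi\rangle=\iint\big(2d_1\cosh(2\pi bk)+2d_2\cosh(2\pi by)\big)|\widetilde\psi(k,y)|^2\dd k\dd y$ with $d_1=\e^{-ab^2/4}<1$ and $d_2=\e^{-(\pi b)^2/a}<1$ is precisely a lower-symbol representation, and Jensen then gives the desired upper bound. The upper symbol you computed enters instead in the lower bound, where one finds $\langle H e_{k,y},e_{k,y}\rangle=\tfrac{2}{d_1}\cosh(2\pi bk)+\tfrac{2}{d_2}\cosh(2\pi by)$. Thus the two symbols differ from the classical symbol by multiplicative constants $d_i$ and $1/d_i$ that straddle $1$; as you correctly note later, such constants only shift the logarithmic threshold by $O(1)$ and affect $\Phi(\lambda)$ at order $O(\lambda\log\lambda)$. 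Once you supply the lower-symbol computation, the rest of your argument goes through and matches the paper. (A minor point: the upper symbol is not a \emph{bounded} perturbation of $\mathcal H$ but a constant \emph{multiple} of it; you effectively use the latter, correct, statement in your error analysis.)
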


The following is an immediate consequence of Theorem \ref{th:Hasymp}.

\begin{corollary}\label{cor:Hasymp}
For any $b>0$ the number $N(\lambda)=\#\set{j\in\N}{\lambda_j<\lambda}$ of eigenvalues of $H(\zeta)$ less than $\lambda$ satisfies 
\begin{align*}
\lim_{\lambda\to\infty}\frac{N(\lambda)}{\log^{2}\lambda}=\frac{1}{(\pi b)^2}\,.
\end{align*}

\end{corollary}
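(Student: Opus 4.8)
The plan is to deduce the Weyl law from Theorem~\ref{th:Hasymp} by the standard monotonicity (Tauberian) argument relating the Riesz mean to the counting function. Set $R(\lambda)=\sum_{j\ge1}(\lambda-\lambda_j)_+$, and recall that $N(\lambda)<\infty$ for every $\lambda$ by Proposition~\ref{prop:Hspec} (applied with $L(x)=\e^{2\pi bx}+\zeta\e^{-2\pi bx}$). Since $(\lambda-\lambda_j)_+$ equals the Lebesgue measure of $\{\mu\in(0,\lambda):\mu>\lambda_j\}$, all $\lambda_j\ge2>0$, and only finitely many $\lambda_j$ lie below any given $\lambda$, summation over $j$ yields the elementary identity
\begin{align*}
R(\lambda)=\int_0^\lambda N(\mu)\dd\mu,\qquad\lambda>0.
\end{align*}
Because $N$ is non-decreasing, for every $h>0$ one has the two-sided bound
\begin{align*}
\frac{R(\lambda)-R(\lambda-h)}{h}\;\le\;N(\lambda)\;\le\;\frac{R(\lambda+h)-R(\lambda)}{h}.
\end{align*}

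Next I would insert $R(\lambda)=\frac{\lambda\log^{2}\lambda}{(\pi b)^{2}}+O(\lambda\log\lambda)$ from Theorem~\ref{th:Hasymp} and choose the window width $h=h(\lambda)=\lambda(\log\lambda)^{-1/2}$. For the main term, the mean value theorem applied to $g(t)=t\log^{2}t$ gives $\tfrac{g(\lambda+h)-g(\lambda)}{h}=g'(\xi)=\log^{2}\xi+2\log\xi$ for some $\xi\in(\lambda,\lambda+h)$; since $h/\lambda=(\log\lambda)^{-1/2}\to0$ we have $\log\xi=\log\lambda+O((\log\lambda)^{-1/2})$, so this contributes $\frac{\log^{2}\lambda}{(\pi b)^{2}}+O(\log\lambda)$ after dividing by $(\pi b)^{2}$. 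For the remainder, the narrowness of the window ($h=o(\lambda)$) keeps the $O(\lambda\log\lambda)$ estimate uniform over $[\lambda,\lambda+h]$, so it contributes $O\!\left(\frac{\lambda\log\lambda}{h}\right)=O\!\left((\log\lambda)^{3/2}\right)=o(\log^{2}\lambda)$. The identical estimates apply to the lower bound with $\lambda-h$ in place of $\lambda+h$. Dividing by $\log^{2}\lambda$ and letting $\lambda\to\infty$ gives
\begin{align*}
\frac{1}{(\pi b)^{2}}\;\le\;\liminf_{\lambda\to\infty}\frac{N(\lambda)}{\log^{2}\lambda}\;\le\;\limsup_{\lambda\to\infty}\frac{N(\lambda)}{\log^{2}\lambda}\;\le\;\frac{1}{(\pi b)^{2}},
\end{align*}
which is the assertion.

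There is no genuine obstacle; the only point meriting a line of care is that the implied constant in the $O(\lambda\log\lambda)$ remainder does not deteriorate when the argument ranges over the short window $[\lambda,\lambda+h]$, which is immediate because $h=o(\lambda)$ forces $(\lambda+h)\log(\lambda+h)\le(1+o(1))\,\lambda\log\lambda$. Equivalently, one may simply invoke the general Tauberian principle that, for a non-decreasing $N$, the relation $\int_0^\lambda N(\mu)\dd\mu= c\,\lambda\log^{2}\lambda\,(1+o(1))$ with $c=(\pi b)^{-2}$ forces $N(\lambda)=c\log^{2}\lambda\,(1+o(1))$; the difference-quotient computation above is just its self-contained proof in the present situation, with the advantage that it also exhibits the error bound $N(\lambda)=\frac{\log^2\lambda}{(\pi b)^2}+O((\log\lambda)^{3/2})$.
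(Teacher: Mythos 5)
Your proof is correct. The paper gives two arguments: one via the Karamata--Tauberian theorem applied to $\tr\,\e^{-tH}$, and a direct elementary one based on the one-sided inequalities $N(\lambda)<R\big((1+\tau)\lambda\big)/(\tau\lambda)$ (with $\tau$ optimized near $\tfrac12\log\lambda$) and $N(\lambda)\ge R(\lambda)/\lambda$, where $R(\lambda)=\sum_{j\ge1}(\lambda-\lambda_j)_+$. Your argument belongs to the second family, but is organized around the identity $R(\lambda)=\int_0^\lambda N(\mu)\dd\mu$ and the symmetric difference-quotient sandwich
\begin{align*}
\frac{R(\lambda)-R(\lambda-h)}{h}\;\le\; N(\lambda)\;\le\;\frac{R(\lambda+h)-R(\lambda)}{h},
\end{align*}
which treats the two bounds in parallel rather than by two unrelated inequalities as the paper does, and in particular reproduces the correct constant without needing the window width $\rho=\tau\lambda$ to grow like $\lambda\log\lambda$. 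One small remark on the window you chose: $h=\lambda(\log\lambda)^{-1/2}$ does not balance the two error sources optimally. Because the remainder in Theorem~\ref{th:Hasymp} is $O(\lambda\log\lambda)$, i.e.\ of the same polynomial order in $\lambda$ as the main term $\lambda\log^2\lambda$, the natural window is a fixed multiple of $\lambda$, say $h=\lambda$; then both the mean-value error (since $\log\xi=\log\lambda+O(1)$) and the remainder contribution $O(\lambda\log\lambda)/h$ are $O(\log\lambda)$. Your narrower window inflates the remainder contribution to $O((\log\lambda)^{3/2})$, so the error bound you advertise at the end is actually weaker than what the method yields, not stronger. None of this affects the corollary, since any $o(\log^2\lambda)$ remainder suffices for the Weyl limit.
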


In particular, the operator $H(\zeta)^{-1}$ is of trace class since
$$\sum_{j=1}^{\infty}\frac{1}{\lambda_{j}}=\int_{2}^{\infty}\frac{1}{\lambda}\dd N(\lambda)=\left. \frac{N(\lambda)}{\lambda}\right|_{2}^{\infty}+\int_{2}^{\infty}\frac{N(\lambda)}{\lambda^{2}}\dd \lambda<\infty.$$
\begin{remark}\label{Weyl law}
Theorem \ref{th:Hasymp} and Corollary \ref{cor:Hasymp} are Weyl type results that link the asymptotical behaviour of quantum mechanical expressions to classical phase space integrals.  
Namely, let
$$\sigma(k,x)=2\cosh(2\pi bk)+\e^{2\pi b x}+\zeta\e^{-2\pi b x}$$
be the total symbol of the operator $H(\zeta)$. Then the term $\log^{2}\lambda/(\pi b)^2$ is precisely the leading term of the phase volume of the classical region 
$\{(k,x)\in\R^{2}: \sigma(k,x)\leq\lambda\}$ as $\lambda\to\infty$. Similarly, $\lambda\log^{2}\lambda/(\pi b)^2$ coincides with the leading term in the phase space integral 
$$\iint_{\R^{2}}(\lambda-\sigma(k,x))_{+}\dd k\dd x\quad\text{as}\quad\lambda\to\infty.$$
\end{remark}

To prove Theorem \ref{th:Hasymp}, we establish lower and upper bounds on the Riesz mean $\sum_{j\ge1}(\lambda-\lambda_j)_+$ in Sect.~\ref{subsec:upper} and \ref{subsec:lower} respectively. To this end we introduce the coherent state representation of $H(\zeta)$. To simplify notation and to keep focus on the arguments involved, we concentrate on the case $\zeta=1$, where $W=2\cosh(2\pi b x)$. Subsequently, we will be using notation $H=H_{0}+W$, not to be confused with the operator $H_{0}+V$. The general case $\zeta>0$ is a straightforward generalization, as is explained in Sect.~\ref{subsec:Hgen}. 

\subsection{The Coherent State Representation}\label{subsec:coherent}
Let $g$ be the Gaussian function $g(x)=(a/\pi)^{1/4}\e^{-\frac a2 x^2}$ with some $a>0$. Clearly $g$ satisfies $\norm[2]{g}=1$ in $L^2(\R)$. 
For $\psi\in L^2(\R)$ the classical coherent state transform (see e.g. \cite[Chapter 12]{Lieb2001}) is given by
\begin{align*}
\widetilde{\psi}(k,y)=\int_\R\e^{-2\pi\ii k x}g(x-y)\psi(x)\dd x\,.
\end{align*}
Denoting by $(f*g)(x)=\int_\R f(x-y)g(y)\dd y$ the convolution of $f$ and $g$, Plancherel's theorem shows that
\begin{align}
\int_\R|\widetilde{\psi}(k,y)|^2\dd k&=(|\psi|^2*|g|^2)(y)\,,\label{eq:Itrans1}\\
\int_\R|\widetilde{\psi}(k,y)|^2\dd y&=(|\widehat{\psi}|^2*|\widehat{g}|^2)(k)\label{eq:Itrans2}\,.
\end{align}
The proof of the second identity also uses the convolution theorem. 

We aim to find representations of $\sclp[2]{H_0\psi,\psi}$ and $\sclp[2]{W\psi,\psi}$ in terms of coherent states. 
It follows from  \eqref{eq:Itrans2} that
\begin{align*}
\iint_{\R^{2}} 2\cosh(2\pi b k)|\widetilde{\psi}(k,y)|^2\dd k\dd y
&=\iint_{\R^{2}} 2\cosh(2\pi b k)|\widehat{\psi}(k-q)|^2|\widehat{g}(q)|^2\dd k\dd q,
\end{align*}
and using 
\begin{align*}
\cosh(x+y)=\cosh x\cosh y+\sinh x\sinh y
\end{align*} 
we obtain 
\begin{gather*}
\iint_{\R^{2}} 2\cosh(2\pi b k)|\widetilde{\psi}(k,y)|^2\dd k\dd y\\
=\iint_{\R^{2}} 2\cosh\big(2\pi b(k-q)\big)|\widehat{\psi}(k-q)|^2\cosh(2\pi b q)|\widehat{g}(q)|^2\dd k\dd q\\
+\iint_{\R^{2}} 2\sinh\big(2\pi b(k-q)\big)|\widehat{\psi}(k-q)|^2\sinh(2\pi bq)|\widehat{g}(q)|^2\dd k\dd q\,.
\end{gather*}
Recalling that $\mathcal{F}H_0\mathcal{F}^{-1}=W$, the first integral on the right-hand side can be computed to be $\frac12\sclp[2]{H_0\psi,\psi}\sclp[2]{W\widehat{g},\widehat{g}}$. Since $g(x)=g(-x)$, it holds that $\widehat{g}(k)=\widehat{g}(-k)$ and consequently the second integral vanishes. Thus for $\psi\in\dom(H_0)$ we obtain the representation 
\begin{align}
\sclp[2]{H_0\psi,\psi}=d_1\iint_{\R^{2}} 2\cosh(2\pi b k)|\widetilde{\psi}(k,y)|^2\dd k\dd y
\label{eq:rep1} 
\end{align}
where
$$d_1=\frac{2}{\sclp[2]{W\widehat{g},\widehat{g}}}=\e^{-ab^2/4}<1.$$

Similarly, we can use \eqref{eq:Itrans1} to compute that 
\begin{align*}
\iint_{\R^{2}} 2\cosh(2\pi by)|\widetilde{\psi}(k,y)|^2\dd k\dd y
&=\iint_{\R^{2}} 2\cosh(2\pi by)|{\psi}(y-q)|^2|{g}(q)|^2\dd y\dd q,
\end{align*}
which with the help of the same trigonometric identity as above can be simplified to 
\begin{align*}
\iint_{\R^{2}} 2\cosh(2\pi by)|\widetilde{\psi}(k,y)|^2\dd k\dd y &=\frac12\sclp[2]{W\psi,\psi}\sclp[2]{Wg,g}.
\end{align*}
Thus for $\psi\in\dom(W)$ we have the representation 
\begin{align}
\sclp[2]{W\psi,\psi}=d_2\iint_{\R^{2}} 2\cosh(2\pi b y)|\widetilde{\psi}(k,y)|^2\dd k\dd y,
\label{eq:rep2} 
\end{align}
where 
$$d_2=\frac{2}{\sclp[2]{Wg,g}}=\e^{-(\pi b)^2/a}<1.$$
Summarizing, we obtain
\begin{equation} \label{H-coherent}
\sclp[2]{H\psi,\psi}=\iint_{\R^{2}}2(d_1\cosh(2\pi b k)+d_2\cosh(2\pi by))|\widetilde{\psi}(k,y)|^2\dd k\dd y.
\end{equation}

\subsection{Deriving an Upper Bound} \label{subsec:upper}
We apply ideas that were used in \cite{Laptev1997} in investigation of the upper bounds on the eigenvalues of a general class of operators on sets of finite measure with Dirichlet boundary condition. While these results relied on the representation of the operators in Fourier space, we will use the representation in terms of the coherent states.  

As a reminder, $\lambda_j$ denote the eigenvalues of $H$ and $\psi_j$ the corresponding orthonormal eigenfunctions which form a complete set. 
We first observe that representation \eqref{H-coherent} yields
\begin{gather*}
\sum_{j\ge1}(\lambda-\lambda_j)_+
=\sum_{j\ge1}(\lambda-\sclp[2]{H\psi_j,\psi_j})_+\\
=\sum_{j\ge1}\left(\lambda- \iint_{\R^{2}} 2\big(d_1 \cosh(2\pi bk)+ d_2 \cosh(2\pi by)\big)|\widetilde{\psi}_j(k,y)|^2\dd k\dd y\right)_+.
\end{gather*}
By Plancherel's theorem it holds that
\begin{equation}
\iint_{\R^{2}}|\widetilde{\psi}_j(k,y)|^2\dd k\dd y=\norm[2]{\psi_j}^2=1
\label{eq:coherentnorm} 
\end{equation}
and consequently we can apply Jensen's inequality with the convex function $x\mapsto (\lambda-x)_+$ to obtain
\begin{gather*}
\sum_{j\ge1}(\lambda-\lambda_j)_+\\
\le \iint_{\R^{2}} \big(\lambda-2d_1\cosh(2\pi bk)- 2d_2\cosh(2\pi by)\big)_+\sum_{j\ge1}|\widetilde{\psi}_j(k,y)|^2\dd k\dd y\,.
\end{gather*}
Put $e_{k,y}(x)=\e^{2\pi\ii kx}g(x-y)$. Since the eigenfunctions $\psi_j$ form an orthonormal basis in $L^{2}(\mathbb{R})$, 
\begin{align*}
\sum_{j=1}^{\infty}|\widetilde{\psi}_j(k,y)|^2=\sum_{j=1}^{\infty}|(e_{k,y},\psi_{j})|^{2}=\Vert e_{k,y}\Vert^{2}=1\quad\text{for all}\quad k,y\in\R,
\end{align*}
and we arrive at the upper bound
\begin{align*}
\sum_{j\ge1}(\lambda-\lambda_j)_+\le \iint_{\R^{2}} \big(\lambda-2d_1\cosh(2\pi bk)- 2d_2\cosh(2\pi by)\big)_+\dd k\dd y\,.
\end{align*}

To investigate the behaviour of the integral on the right-hand side as $\lambda\to\infty$, we first note that \begin{align*}
\sum_{j\ge1}(\lambda-\lambda_j)_+
&\le4\int_0^\infty\int_0^\infty \big(\lambda-2d_1\cosh(2\pi bk)- 2d_2\cosh(2\pi by)\big)_+\dd k\dd y\\
&\le4\int_0^\infty\int_0^\infty \big(\lambda-d_1 \e^{2\pi bk}- d_2 \e^{2\pi by}\big)_+\dd k\dd y\,,
\end{align*}
where we used that $2\cosh x>\e^{x}$ for $x>0$.
Changing the variables $u_1=d_1\e^{2\pi bk}, u_2=d_2\e^{2\pi by}$ we arrive at
\begin{align*}
\sum_{j\ge1}(\lambda-\lambda_j)_+
&\le \frac{1}{(\pi b)^2}\int_{d_1}^\infty\int_{d_2}^\infty\frac{(\lambda-u_1-u_2)_+}{u_1u_2}\dd u_2\dd u_1\\
&=\frac{1}{(\pi b)^2}\int_{d_1}^{\lambda-d_2}\int_{d_2}^{\lambda-u_1}\frac{\lambda-u_1-u_2}{u_1u_2}\dd u_2\dd u_1\,,
\end{align*}
where $\lambda\ge d_1+d_2$ since $\lambda\ge2$ and $d_1,d_2\le 1$.  Now we immediately obtain
\begin{align*}
\int_{d_1}^{\lambda-d_2}\int_{d_2}^{\lambda-u_1}\frac{\lambda-u_1-u_2}{u_1u_2}\dd u_2\dd u_1&=\lambda\int_{d_1/\lambda}^{1-d_2/\lambda}\int_{d_2/\lambda}^{1-v_{1}}\frac{1-v_1-v_2}{v_1v_2}\dd v_2\dd v_1\\
&= \lambda\log^{2}\lambda+O(\lambda\log\lambda)
\end{align*}
as $\lambda\rightarrow\infty$, so that
\begin{align*}
\sum_{j\ge 1}(\lambda-\lambda_j)_+\le \frac{\lambda\log^{2}\lambda}{(\pi b)^2}+O(\lambda\log\lambda).
\end{align*}

\subsection{Deriving a Lower Bound}\label{subsec:lower}
To obtain a lower bound, we use a different argument. 
The ideas in this section are again taken from \cite{Laptev1997}, where a lower bound on the eigenvalues of a general class of operators on sets of finite measure with Neumann boundary condition was obtained. Similarly to the previous subsection, the coherent state transform will replace the Fourier transform.   

Recalling \eqref{eq:coherentnorm}, we start from the identity
\begin{align*}
\sum_{j\ge1}(\lambda-\lambda_j)_+
=\sum_{j\ge1}(\lambda-\lambda_j)_+\iint_{\R^{2}}|\widetilde{\psi}_j(k,y)|^2\dd k\dd y,
\end{align*}
and observing that 
\begin{equation*}
\widetilde{\psi}_j(k,y)=\int_\R\psi_j(x)\overline{e_{k,y}(x)}\dd x=\sclp[2]{\psi_j,e_{k,y}},
\end{equation*}
we obtain 
\begin{align*}
\sum_{j\ge1}(\lambda-\lambda_j)_+ 
&=\iint_{\R^{2}} \sum_{j\ge1}(\lambda-\lambda_j)_+\sclp[2]{\psi_j,e_{k,y}}\overline{\sclp[2]{\psi_j,e_{k,y}}}\dd k\dd y\\
&=\iint_{\R^{2}} \sum_{j\ge1}(\lambda-\lambda_j)_+\big\langle\sclp[2]{e_{k,y},\psi_j}\psi_j,e_{k,y}\big\rangle\dd k\dd y\,.
\end{align*}
Denoting by $\mathrm{d} E_{\mu}$ the projection-valued measure for $H$ on $[2,\infty)$, we conclude that
\begin{align*}
\sum_{j\ge1}(\lambda-\lambda_j)_+
=\iint_{\R^{2}}\int_2^\infty(\lambda-\mu)_+\sclp[2]{\mathrm{d} E_{\mu} e_{k,y},e_{k,y}}\dd k\dd y\,.
\end{align*}
Since by the spectral theorem
\begin{align*}
\int_2^\infty \sclp[2]{\mathrm{d} E_{\mu} e_{k,y},e_{k,y}}=\sclp[2]{e_{k,y},e_{k,y}}=\norm[2]{g}^2=1,
\end{align*}
we can apply Jensen's inequality with the convex function $x\mapsto(\lambda-x)_+$ and obtain the lower bound
\begin{align}
\sum_{j\ge1}(\lambda-\lambda_j)_+ 
\ge \iint_{\R^{2}}\left(\lambda-\int_2^\infty \mu\sclp[2]{\mathrm{d} E(\mu) e_{k,y},e_{k,y}}\right)_+\dd k\dd y\,.
\label{eq:lbound} 
\end{align}
Again it follows from the spectral theorem that
\begin{align*}
\int_2^\infty \mu\sclp[2]{\mathrm{d} E(\mu) e_{k,y},e_{k,y}}
=\sclp[2]{He_{k,y},e_{k,y}}=\sclp[2]{H_0e_{k,y},e_{k,y}}+\sclp[2]{We_{k,y},e_{k,y}}\,.
\end{align*}
 
The two terms on the right-hand side can be computed explicitly. We first consider $\sclp[2]{He_{k,y},e_{k,y}}$ and note that
\begin{align*}
g(x-y\pm\ii b)=\e^{\frac{ab^2}{2}}g(x-y)\e^{\mp a(x-y)\ii b},
\end{align*}
whence
\begin{align*}
\sclp[2]{H_0e_{k,y},e_{k,y}}
&=\int_\R\big(\e^{-2\pi bk}g(x-y+\ii b)+\e^{2\pi bk}g(x-y-\ii b)\big)g(x-y)\dd x\\
&=\e^{\frac{ab^2}{2}}\left(\e^{-2\pi bk}\int_\R g(z)^2\e^{-\ii abz}\dd z+\e^{2\pi bk}\int_\R g(z)^2\e^{\ii abz}\dd z\right)\\
&=\frac{1}{d_1}2\cosh(2\pi bk)\,.
\end{align*}
For the second term, $\sclp[2]{We_{k,y},e_{k,y}}$, we get 
\begin{align*}
\sclp[2]{We_{k,y},e_{k,y}}
&=\int_\R2\cosh(2\pi bx)g(x-y)^2\dd x\\
&=\int_\R2\cosh\big(2\pi b(x-y)\big)\cosh(2\pi by) g(x-y)^2\dd x
\\&\phantom{=}+\int_\R2\sinh\big(2\pi b(x-y)\big)\sinh(2\pi by)g(x-y)^2\dd x\\
&=\frac{1}{d_2}2\cosh(2\pi by)\,.
\end{align*}

Combining these two results with \eqref{eq:lbound} we arrive at
\begin{align*}
\sum_{j\ge1}(\lambda-\lambda_j)_+ 
&\ge\iint_{\R^{2}}\left(\lambda-\frac{2}{d_1}\cosh(2\pi bk)-\frac{2}{d_2}\cosh(2\pi by)\right)_+\!\dd k\dd y\\
&=4 \int_0^\infty\int_0^\infty\left(\lambda-\frac{2}{d_1}\cosh(2\pi bk)-\frac{2}{d_2}\cosh(2\pi by)\right)_+\!\dd k\dd y\,. 
\end{align*}
Note that $2\cosh x\le 2\e^{x}$ for $x\ge0$ and thus 
\begin{align*}
\sum_{j\ge1}(\lambda-\lambda_j)_+ 
\ge 4 \int_0^\infty\int_0^\infty\left(\lambda-\frac{2}{d_1}\e^{2\pi bk}-\frac{2}{d_2}\e^{2\pi by}\right)_+\dd k\dd y\,.
\end{align*}
The integral on the right-hand side is computed in the same way as in the previous section. The only difference is that the numbers $d_1, d_2$ have been replaced by $2/d_1,2/d_2$. These coefficients have no influence on the leading term for large $\lambda$ as long as $\lambda\ge2/d_1+2/d_2$, and we conclude 
\begin{align*}
\sum_{j\ge1}(\lambda-\lambda_j)_+ 
\ge  \frac{1}{(\pi b)^2}\lambda\log^{2}\lambda+O(\lambda\log\lambda)\quad\text{as}\quad\lambda\to\infty.
\end{align*}

\subsection{The Number of Eigenvalues}\label{subsec:N}
We present two proofs of Corollary \ref{cor:Hasymp}. One uses the Karamata--Tauberian theorem \cite{Karamata1931} to deduce it from Theorem \ref{th:Hasymp}, while the other 
consists in obtaining the optimal bounds for $N(\lambda)$ from the Riesz mean.
\subsubsection{Proof of Corollary \ref{cor:Hasymp} with the Karamata--Tauberian Theorem}
The Karamata--Tauberian theorem (for the proof see, e.g.,  \cite[Theorem 10.3]{Simon2005}) connects the asymptotic behaviour of $N(\lambda)$ for large $\lambda$ to the divergence of $\tr\,\e^{-tH}$ for small $t$. In \cite{Laptev1999} a general method was discussed that allows to obtain asymptotics of the traces of convex functions of self-adjoint operators from the behaviour of their Riesz means. Namely, from the representation 
\begin{align*}
\e^{-t\lambda}=t^2 \int_\R (s-\lambda)_+\e^{-ts}\dd s
\end{align*}  
for $\lambda\ge0$ and asymptotic behaviour \eqref{eq:Hasymp} we get the upper bound
\begin{align*}
\tr\,\e^{-tH}&=t^2 \int_0^\infty \sum_{j\ge1}(s-\lambda_j)_+\e^{-ts}\dd s\\ 
&\le\frac{t^2}{(\pi b)^2}\int_0^\infty s(\log s)^2\e^{-ts}\dd s +t^2C\int_0^\infty s(\log s)\e^{-ts}\dd s
\end{align*}
with some constant $C>0$, 
as well as a similar lower bound with a different constant. The two integrals on the right-hand side are computed explicitly and we obtain
\begin{align*}
\lim_{t\to0}\frac{\tr\,\e^{-tH}}{\log^{2} t}=\frac{1}{(\pi b)^2}\,.
\end{align*}
A slight modification of the Karamata--Tauberian theorem that allows for logarithmic terms \cite{Simon1983} implies that
\begin{align*}
\lim_{\lambda\to\infty}\frac{N(\lambda)}{\log^{2}\lambda}=\frac{1}{(\pi b)^2}\,.
\end{align*}

\subsubsection{Direct Proof of Corollary \ref{cor:Hasymp}}
To derive an upper bound on $N(\lambda)$, we let $\mu\ge\rho>0$ and note the that
$$\sum_{j\ge1}(\mu-\lambda_j)_+=\sum_{\lambda_{j}<\mu}(\mu-\lambda_{j})\ge\sum_{\lambda_{j}<\mu-\rho}(\mu-\lambda_{j})>\rho N(\mu-\rho).$$
We can now use  asymptotic behaviour \eqref{eq:Hasymp} of the Riesz mean to conclude that there exists a $C>0$ such that
\begin{align*}
N(\mu-\rho)\le\frac{\mu\log^{2}\mu}{\rho(\pi b)^2}+\frac{C}{\rho}\mu\log\mu\,.
\end{align*}
With $\tau>0$ we now choose $\mu=(1+\tau)\lambda$ and $\rho=\tau\lambda$ such that $\mu-\rho=\lambda$ and
\begin{align*}
N(\lambda)\le\frac{1}{(\pi b)^2}\left(1+\frac{1}{\tau}\right)\left(\log^{2}(\lambda+\lambda\tau)+C\log(\lambda+\lambda\tau)\right).
\end{align*}
It remains to optimize this upper bound with respect to $\tau>0$. The minimum is attained at $\tau_0$ defined by  the equation
\begin{align*}
2\tau_0=\log(\lambda+\lambda\tau_0)\,.
\end{align*}
Since $2\tau-\log(1+\tau)$ is bijective as a function from $[0,\infty)$ to $[0,\infty)$, a unique solution $\tau_0$ exists for every $\lambda$. 
It clearly holds that $\tau_0\to\infty$ as $\lambda\to\infty$ and thus $\tau_0\le\log\lambda$ for sufficiently large $\lambda$. We can conclude that
\begin{align*}
\limsup_{\lambda\to\infty}\frac{N(\lambda)}{\log^{2}\lambda}\le\frac{1}{(\pi b)^2}\,.
\end{align*}
To find an analogous lower bound we note that again by \eqref{eq:Hasymp} for $\lambda\ge2$
\begin{align*}
N(\lambda)\ge\sum_{j\ge1}\left(1-\frac{\lambda_j}{\lambda}\right)_+
=\frac{1}{\lambda}\sum_{j\ge1}(\lambda-\lambda_j)_+
\ge\frac{\log^{2}\lambda}{(\pi b)^2}+C\log\lambda
\end{align*}
with some constant $C>0$.

\subsection{The General Case $\zeta>0$}\label{subsec:Hgen}
It is straightforward to generalize the proof of Theorem \ref{th:Hasymp} to any $\zeta>0$. The coherent state representation of $W(\zeta)=V+\zeta V^{-1}$ can be computed to be
\begin{align*}
\sclp{W(\zeta)\psi,\psi}=d_2\iint_{\R^{2}}(\e^{2\pi by}+\zeta\e^{-2\pi by})|\widetilde{\psi}(k,y)|^2\dd k\dd y
\end{align*}
for $\psi\in\dom(W(\zeta))$. Repeating calculations of Sect.~\ref{subsec:upper} leads to an upper bound of the Riesz $\sum_{j\ge1}(\lambda-\lambda_j)_+$, which can be written as a sum of four integrals of the form $\int_0^\infty\int_0^\infty (\lambda-c_1\e^{2\pi bk}-c_2\e^{2\pi by})_+\dd k\dd y$. The asymptotic behaviour of these integrals was discussed in Sect.~\ref{subsec:upper}. A lower bound of the Riesz mean can be established by repeating verbatim the computations in Sect.~\ref{subsec:lower}, which proves Theorem \ref{th:Hasymp} and Corollary \ref{cor:Hasymp} for $\zeta>0$.

\section{The Operator $H_{m,n}$}
 
The operator $H_{m,n}=U+V+q^{-mn}U^{-m}V^{-n}$ is given by the following formal functional-difference expression
\begin{align*}
(H_{m,n}\psi)(x)=\psi(x+\ii b)+\e^{2\pi bx}\psi(x)+q^{-mn}\e^{-2\pi nbx}\psi(x-m\ii b)\,.
\end{align*} 
The operator $H_{m,n}$ is symmetric and non-negative on the domain $\psi\in\mathscr{D}$ consisting of linear combinations of the functions $p(x)\e^{-x^{2}+cx}$, where $p(x)$ is a polynomial and $c\in\C$.  Indeed, for $\psi\in\mathscr{D}$ it follows from the Weyl relation
$$U^{-m}\widetilde{V}^{-n}=q^{mn}\widetilde{V}^{-n}U^{-m},\quad\text{where}\quad \widetilde{V}=V^{1/2}=\e^{\pi bQ},$$
that 
\begin{equation}\label{Weyl-m-n}
q^{-mn}\langle U^{-m}V^{-n}\psi,\psi\rangle=\langle U^{-m}\widetilde{V}^{-n}\psi,\widetilde{V}^{-n}\psi\rangle\ge 0.
\end{equation}
Whence $H_{m,n}$ admits a Friedrichs extension and it what follows we will continue to denote it by $H_{m,n}$.
The spectrum of this operator consists of positive eigenvalues $\lambda_j$ that converge to infinity, $\lim_{j\to\infty}\lambda_j=\infty$. The proof of this statement is deferred to the end of Sect.~\ref{sec:coHmn} since it makes use of the coherent state representation of $H_{m,n}$. 

\begin{theorem}\label{th:Gasymp}
For $m,n\in\N$ the eigenvalues $\lambda_j$ of the operator $H_{m,n}$ have the following asymptotic behaviour
\begin{align*}
\sum_{j\ge1}(\lambda-\lambda_j)_+=\frac{c_{m,m}}{(2\pi b)^2}\lambda\log^{2}\lambda+O(\lambda\log\lambda)\quad\text{as}\quad\lambda\to\infty,
\end{align*}
where $c_{m,n}=\dfrac{(m+n+1)^{2}}{2mn}$.
\end{theorem}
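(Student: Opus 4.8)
The plan is to mimic the proof of Theorem~\ref{th:Hasymp} with the coherent state transform, so the core task is to work out the correct coherent state representation of the three-term operator $H_{m,n}=U+V+q^{-mn}U^{-m}V^{-n}$ and then to evaluate the resulting classical phase space integral. First I would record, exactly as in Section~\ref{subsec:coherent}, the action on the coherent vectors $e_{k,y}(x)=\e^{2\pi\ii kx}g(x-y)$, using the two facts $g(x-y+s\ii b)=\e^{as^2b^2/2}g(x-y)\e^{-as(x-y)\ii b}$ (a Gaussian shift in the imaginary direction) and the translation identity $\cosh$/$\sinh$ addition formula (or rather its exponential analogue). The term $U$ contributes $\langle U e_{k,y},e_{k,y}\rangle = c_U\,\e^{-2\pi bk}$ for an explicit constant $c_U=\e^{ab^2/2}$ (and similarly with a weight for $V$), while the more delicate term $q^{-mn}U^{-m}V^{-n}$ requires combining a shift of the argument by $m\ii b$ with multiplication by $\e^{-2\pi nbx}$; the phase $q^{-mn}=\e^{-\ii\pi mnb^2}$ is precisely what is needed to make $\langle q^{-mn}U^{-m}V^{-n}e_{k,y},e_{k,y}\rangle$ a positive real multiple of $\e^{2\pi bmk-2\pi bny}$, consistent with the non-negativity observed in \eqref{Weyl-m-n}. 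Combining, one gets a representation
\begin{align*}
\langle H_{m,n}\psi,\psi\rangle=\iint_{\R^{2}}\sigma_{m,n}(k,y)\,|\widetilde{\psi}(k,y)|^2\dd k\dd y
\end{align*}
where $\sigma_{m,n}$ is a sum of three exponentials $\alpha_1\e^{-2\pi bk}+\alpha_2\e^{2\pi by}+\alpha_3\e^{2\pi bmk-2\pi bny}$ with explicit positive coefficients depending on $a$; and, as in the two-term case, the reciprocals of those coefficients appear in the dual identity $\langle H_{m,n}e_{k,y},e_{k,y}\rangle=\beta_1\e^{-2\pi bk}+\beta_2\e^{2\pi by}+\beta_3\e^{2\pi bmk-2\pi bny}$ with $\beta_i$ comparable to $1/\alpha_i$.

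Once these two representations are in hand, the upper and lower bounds on $\sum_{j\ge1}(\lambda-\lambda_j)_+$ follow verbatim from Sections~\ref{subsec:upper} and~\ref{subsec:lower}: Jensen's inequality applied to the convex function $x\mapsto(\lambda-x)_+$ together with the completeness relation $\sum_j|\widetilde{\psi}_j(k,y)|^2=\|e_{k,y}\|^2=1$ gives
\begin{align*}
\sum_{j\ge1}(\lambda-\lambda_j)_+\le\iint_{\R^{2}}\big(\lambda-\alpha_1\e^{-2\pi bk}-\alpha_2\e^{2\pi by}-\alpha_3\e^{2\pi bmk-2\pi bny}\big)_+\dd k\dd y,
\end{align*}
and the spectral-theorem argument with $\mathrm{d}E_\mu$ gives the matching lower bound with the $\alpha_i$ replaced by the $\beta_i$. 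Since the prefactors $\alpha_i,\beta_i$ are fixed positive constants, they do not affect the leading term, so the whole problem reduces to the asymptotics as $\lambda\to\infty$ of the classical phase volume integral
\begin{align*}
I(\lambda)=\iint_{\{k,y\in\R:\ e^{-2\pi bk}+e^{2\pi by}+e^{2\pi b(mk-ny)}\le\lambda\}}(\lambda-\cdots)_+\,\dd k\dd y.
\end{align*}
I expect this to be the main obstacle: unlike the separable region $\{d_1\e^{2\pi bk}+d_2\e^{2\pi by}\le\lambda\}$, the region here is a genuine three-term simplex-type set in $\log$-coordinates. The natural move is to substitute $u=-2\pi bk$, $v=2\pi by$ (up to constants $t_1=e^{-2\pi bk}$ etc.), after which the region $\{t_1+t_2+t_3\le\lambda,\ t_i>0\}$ with $t_3=t_1^{-m}$-type constraint... more precisely one should pass to the variables $\xi=-k$, $\eta=y$ and observe that the polygon $\{e^{2\pi b\xi}\le\lambda,\ e^{2\pi b\eta}\le\lambda,\ e^{2\pi b(-m\xi-n\eta)}\le\lambda\}$, after scaling by $\log\lambda$, becomes a fixed triangle $\Delta_{m,n}=\{\xi,\eta\ge 0,\ m\xi+n\eta\le 1\}$ (this is exactly the Newton polygon / moment polytope of the mirror curve for $\mathbb{P}(1,m,n)$), whose Euclidean area is $1/(2mn)$.

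Carrying this out, the leading behaviour of the \emph{counting} integral $\iint(\lambda-\sigma_{m,n})_+$ picks up an extra factor $\lambda$ and a factor from the number of lattice-like copies, and one has to keep track of the constant carefully. The cleanest route is: first establish $N(\lambda)\sim \mathrm{area}(\log\lambda\cdot\widetilde\Delta_{m,n})/(2\pi b)^2$ where $\widetilde\Delta_{m,n}$ is the relevant polygon, then integrate to recover the Riesz mean, or conversely compute the Riesz-mean integral directly and read off $c_{m,n}=(m+n+1)^2/(2mn)$ — the shape $(m+n+1)^2$ strongly suggests that the relevant polygon is not $\Delta_{m,n}$ itself but its appropriate dilate/translate accounting for all three inequalities simultaneously (the three half-planes $\xi\le\log\lambda/(2\pi b)$ etc. bounding a triangle whose vertices are at mutual distances scaling like $(1+m+n)\log\lambda$), so that its area is $\tfrac12(m+n+1)^2/(mn)\cdot(\log\lambda/2\pi b)^2$ and the factor $\tfrac12$ combines with the Riesz-mean integration. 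I would verify the constant by specializing to $m=1,n=0$, where $c_{1,0}=(1+0+1)^2/(2\cdot 1\cdot 0)$ is singular — consistent with the fact that $H_{1,0}=H=U+V$ has a different (one-sided) asymptotic regime — and instead cross-check against $H(\zeta)$: the two-term operator gives $1/(\pi b)^2=4/(2\pi b)^2$, matching $c/(2\pi b)^2$ with $c=4$, which is the area-type constant for the two-sided square, giving confidence in the bookkeeping. After the integral asymptotics are pinned down, the passage from the Riesz-mean asymptotics to the Weyl law for $N(\lambda)$ is identical to Section~\ref{subsec:N} (either via Karamata--Tauberian or the direct $\mu=(1+\tau)\lambda$, $\rho=\tau\lambda$ optimization), and the trace-class statement for $H_{m,n}^{-1}$ follows as before from $\sum_j\lambda_j^{-1}=\int_2^\infty\lambda^{-2}N(\lambda)\dd\lambda<\infty$. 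Finally, the deferred claim that $H_{m,n}$ has purely discrete spectrum should follow from a Birman--Schwinger argument analogous to Proposition~\ref{prop:Hspec}: in the coherent state picture the symbol $\sigma_{m,n}(k,y)\to\infty$ as $|(k,y)|\to\infty$ (the three exponentials cover all directions since $(-1,0),(0,1),(m,-n)$ positively span $\R^2$ for $m,n\ge 1$), so sublevel sets have finite measure and the relevant Birman--Schwinger operator is compact.
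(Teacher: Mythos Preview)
Your framework---deriving a coherent state representation of $\langle H_{m,n}\psi,\psi\rangle$ as an integral against a three-exponential symbol $\sigma_{m,n}(k,y)=\alpha_1\e^{-2\pi bk}+\alpha_2\e^{2\pi by}+\alpha_3\e^{2\pi b(mk-ny)}$, then applying Jensen's inequality in both directions to sandwich the Riesz mean between two phase-space integrals---is exactly the paper's route; the paper in fact obtains $\alpha_1=d_1$, $\alpha_2=d_2$, $\alpha_3=d_1^{m^2}d_2^{n^2}$ and $\beta_i=1/\alpha_i$ exactly, not merely comparably. Your discreteness sketch via $\sigma_{m,n}(k,y)\to\infty$ (the three exponent directions positively span $\R^2$) is also essentially the content of Proposition~\ref{prop:Hmnspec}, though the paper makes it quantitative by bounding $\sigma_{m,n}$ below by a separable symbol of the type covered by Proposition~\ref{prop:Hspec}.

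Where you diverge is in evaluating $\iint_{\R^2}(\lambda-\sigma_{m,n})_+\,\dd k\,\dd y$. The paper splits $\R^2$ into the four quadrants and in each discards (upper bound) or majorizes (lower bound) the exponentially small term(s), reducing every piece to a two-term integral of the kind already computed in Section~\ref{sec:H}; the four leading coefficients $1,\ \tfrac{1}{2mn},\ \tfrac{n+2}{2m},\ \tfrac{m+2}{2n}$ then add up to $c_{m,n}$. You instead propose a global Newton-polygon/triangle computation. This is viable and geometrically cleaner, but be careful: your first candidate $\Delta_{m,n}=\{\xi,\eta\ge0,\ m\xi+n\eta\le1\}$ is the wrong triangle---its area is $1/(2mn)$, not $c_{m,n}$. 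The actual rescaled sublevel set (with $\xi=-k$, $\eta=y$, $L=\log\lambda/(2\pi b)$) is $\{\xi\le1,\ \eta\le1,\ m\xi+n\eta\ge-1\}$, and \emph{that} triangle has area $(m+n+1)^2/(2mn)$, as you correctly intuit after backtracking. The paper's quadrant method is more pedestrian but buys you the $O(\lambda\log\lambda)$ remainder directly from explicit integrals; your route via the area $A(\mu)$ of $\{\sigma_{m,n}\le\mu\}$ and the layer-cake identity $\iint(\lambda-\sigma_{m,n})_+=\int_0^\lambda A(\mu)\,\dd\mu$ requires the intermediate estimate $A(\mu)=c_{m,n}\log^2\mu/(2\pi b)^2+O(\log\mu)$, which is true (the sublevel set differs from the triangle only in an $O(1)$-neighborhood of its boundary) but is a step you have not yet written down.
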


Having established Theorem \ref{th:Gasymp}, the exact same argument as in Sect.~\ref{subsec:N} proves the following corollary.

\begin{corollary}\label{cor:Gasymp}
The number $N(\lambda)=\#\set{j\in\N}{\lambda_j<\lambda}$ of eigenvalues of $H_{m,n}$ less than $\lambda$ satisfies
\begin{align*}
\lim_{\lambda\to\infty}\frac{N(\lambda)}{\log^{2}\lambda}=\frac{c_{m,n}}{(2\pi b)^2}\,.
\end{align*}
\end{corollary}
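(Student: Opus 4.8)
The plan is to deduce Corollary~\ref{cor:Gasymp} from Theorem~\ref{th:Gasymp} by the very same two Tauberian arguments used in Sect.~\ref{subsec:N} to pass from the Riesz-mean asymptotics of $H(\zeta)$ to its counting function; those arguments use only the shape of the asymptotics, not the particular operator. Throughout write $R(\lambda)=\sum_{j\ge1}(\lambda-\lambda_j)_+$, so that Theorem~\ref{th:Gasymp} reads $R(\lambda)=\frac{c_{m,n}}{(2\pi b)^2}\lambda\log^{2}\lambda+O(\lambda\log\lambda)$ as $\lambda\to\infty$, with $c_{m,n}=(m+n+1)^2/(2mn)$.

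First I would establish the lower bound. Since the eigenvalues of $H_{m,n}$ are positive and tend to $+\infty$, for all large $\lambda$ one has $N(\lambda)\ge\sum_{j\ge1}(1-\lambda_j/\lambda)_+=R(\lambda)/\lambda$, which by Theorem~\ref{th:Gasymp} gives $N(\lambda)\ge\frac{c_{m,n}}{(2\pi b)^2}\log^{2}\lambda+O(\log\lambda)$ and hence $\liminf_{\lambda\to\infty}N(\lambda)/\log^{2}\lambda\ge c_{m,n}/(2\pi b)^2$. Next, for the upper bound I would use the elementary inequality $R(\mu)\ge\rho\,N(\mu-\rho)$ valid for $\mu\ge\rho>0$; by Theorem~\ref{th:Gasymp} there is $C>0$ with $N(\mu-\rho)\le\frac{c_{m,n}}{\rho(2\pi b)^2}\mu\log^{2}\mu+\frac{C}{\rho}\mu\log\mu$. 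Choosing $\mu=(1+\tau)\lambda$ and $\rho=\tau\lambda$ with a free parameter $\tau>0$ yields
\[
N(\lambda)\le\frac{c_{m,n}}{(2\pi b)^2}\Bigl(1+\frac1\tau\Bigr)\bigl(\log^{2}((1+\tau)\lambda)+C\log((1+\tau)\lambda)\bigr),
\]
and minimizing the prefactor over $\tau$ — the optimal $\tau_0$ solves $2\tau_0=\log((1+\tau_0)\lambda)$, which has a unique solution because $\tau\mapsto 2\tau-\log(1+\tau)$ is a bijection of $[0,\infty)$, and $\tau_0\to\infty$ so that $\tau_0\le\log\lambda$ eventually — gives $\limsup_{\lambda\to\infty}N(\lambda)/\log^{2}\lambda\le c_{m,n}/(2\pi b)^2$. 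Combining the two bounds proves the corollary.

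Alternatively, one may route through the Karamata--Tauberian theorem: from $\e^{-t\lambda}=t^2\int_\R(s-\lambda)_+\e^{-ts}\dd s$ and Theorem~\ref{th:Gasymp}, after evaluating the two resulting elementary integrals, one gets $\lim_{t\to0}\tr\,\e^{-tH_{m,n}}/\log^{2}t=c_{m,n}/(2\pi b)^2$, and the version of the Tauberian theorem allowing for logarithmic corrections \cite{Simon1983} converts this into the stated asymptotics for $N(\lambda)$.

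No genuine obstacle arises at this stage: both arguments are insensitive to the value of the constant and transfer word for word from Sect.~\ref{subsec:N}; the only point to check is that the eigenvalue facts used for $H_{m,n}$ (positivity and $\lambda_j\to\infty$) are available, which is guaranteed by the spectral statement established via the coherent state representation in Sect.~\ref{sec:coHmn}. All the substance sits in Theorem~\ref{th:Gasymp}, which here is taken as given.
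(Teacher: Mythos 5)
Your proposal is correct and matches the paper exactly: the paper proves Corollary~\ref{cor:Gasymp} by simply citing that the argument of Sect.~\ref{subsec:N} carries over verbatim once Theorem~\ref{th:Gasymp} is in hand, which is precisely what you have spelled out. You have also correctly noted the one prerequisite to check (positivity and $\lambda_j\to\infty$ for $H_{m,n}$, supplied by Proposition~\ref{prop:Hmnspec}), so nothing is missing.
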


In particular, this implies that the operator $H_{m,n}^{-1}$ is of trace class since
$$\sum_{j=1}^{\infty}\frac{1}{\lambda_{j}}=\int_{\lambda_{1}}^{\infty}\frac{1}{\lambda}\dd N(\lambda)=\left. \frac{N(\lambda)}{\lambda}\right|_{\lambda_{1}}^{\infty}+\int_{\lambda_1}^{\infty}\frac{N(\lambda)}{\lambda^{2}}\dd \lambda<\infty.$$

\begin{remark} As in Remark \ref{Weyl law}, the term $c_{m,n}\log^{2}\lambda/(2\pi b)^{2}$ is precisely the leading term as $\lambda\to\infty$ of the phase volume of the classical region $\{(k,x)\in\R^{2} : \e^{-2\pi b k}+\e^{2\pi b x}+\e^{2\pi b(mk-nx)}\le \lambda\}$. Similarly, the term $c_{m,n}\lambda\log^{2}\lambda/(2\pi b)^{2}$ coincides with the leading term in the phase space integral 
$$\iint_{\R^{2}}(\lambda-\e^{-2\pi b k}-\e^{2\pi b x}-\e^{2\pi b(mk-nx)})_+\dd k\dd x\quad\text{as}\quad\lambda\to\infty.$$
\end{remark}

As in Sect.~\ref{sec:H}, we first obtain a representation of $H_{m,n}$ using the coherent state transform and then prove the upper and lower bounds. The computations will closely follow those in Sect.~\ref{subsec:coherent}, \ref{subsec:upper} and \ref{subsec:lower}, and we will just highlight the main points.

\subsection{The Coherent State Representation}\label{sec:coHmn}
Let $\widetilde{\psi}$ again denote the coherent state transform of a function $\psi\in L^2(\R)$ with respect to the Gaussian function $g$. 
In complete analogy with Sect.~\ref{subsec:coherent}, identity \eqref{eq:Itrans2}, together with the facts that $U=\mathcal{F}^{-1}V^{-1}\mathcal{F}$ and $\e^{-2\pi bk}=\e^{-2\pi b(k-q)}\e^{2\pi bq}$, leads to the representation
\begin{align*}
\sclp[2]{U\psi,\psi}=d_1\iint_{\R^{2}} \e^{-2\pi bk}|\widetilde{\psi}(k,y)|^2\dd k\dd y\,.
\end{align*}
Here, we have used the symmetries of the functions involved to conclude that
\begin{align*}
\frac{1}{\sclp[2]{V^{-1}\widehat{g},\widehat{g}}}=\frac{2}{\sclp[2]{W\widehat{g},\widehat{g}}}=d_1\,.
\end{align*}
Similarly, 
\begin{align*}
\sclp[2]{U^{-m}\psi,\psi}=d^{m^{2}}_1\iint_{\R^{2}} \e^{2\pi bmk}|\widetilde{\psi}(k,y)|^2\dd k\dd y\,. 
\end{align*}
In the same way identity \eqref{eq:Itrans1} yields the representation
\begin{align*}
\sclp[2]{V\psi,\psi}=d_2\iint_{\R^{2}} \e^{2\pi by}|\widetilde{\psi}(k,y)|^2\dd k\dd y\,,
\end{align*}
where we have used that 
\begin{align*}
\frac{1}{\sclp[2]{Vg,g}}=\frac{1}{\sclp[2]{V^{-1}g,g}}=\frac{2}{\sclp[2]{Wg,g}}=d_2\,,
\end{align*}
since $g$ is even. 

To derive of the representation of the mixed term $q^{-mn}U^{-m}V^{-n}$ we use \eqref{Weyl-m-n} to get
$$q^{-mn}\sclp[2]{U^{-m}V^{-n}\psi,\psi}=\sclp[2]{U^{-m}\psi_{1},\psi_{1}}=d^{m^{2}}_1\iint_{\R^{2}} \e^{2\pi bmk}|\widetilde{\psi}_{1}(k,y)|^2\dd k\dd y,$$
where $\widetilde{\psi}_{1}(k,y)$ is the coherent state transform of the function $\psi_{1}(x)=(\tilde{V}^{-n}\psi)(x)=\e^{-\pi bnx}\psi(x)$. Completing the square, we obtain 
\begin{align*}
\widetilde{\psi}_{1}(k,y)& =\int_{\R}\e^{-2\pi \ii kx}g(x-y)\e^{-\pi bnx}\psi(x)\dd x=
\e^{\frac{(\pi nb)^{2}}{2a}-\pi bny }\,\tilde{\psi}\left(k,y-\tfrac{\pi nb}{a}\right),
\end{align*} 
so that
\begin{align*}
q^{-mn}\sclp[2]{U^{-m}V^{-n}\psi,\psi} &=d^{m^{2}}_1\e^{\frac{(\pi nb)^{2}}{a}}\iint_{\R^{2}} \e^{2\pi b(mk-ny)}|\widetilde{\psi}\left(k,y-\tfrac{\pi nb}{a}\right)|^2\dd k\dd y \\
 &=d^{m^{2}}_1d^{n^{2}}_{2}\iint_{\R^{2}} \e^{2\pi b(mk-ny)}|\widetilde{\psi}(k,y)|^2\dd k\dd y. 
\end{align*}
Summarizing, we obtain  the coherent state representation of the operator $H_{m,n}$,
\begin{gather} 
\sclp{H_{m,n} \psi,\psi} =\nonumber \\  
\iint_{\R^{2}} \left(d_1 \e^{-2\pi bk} + d_2 \e^{2\pi by} + d_3 \e^{2\pi b (m k-ny)}  
\right) |\widetilde\psi (k,y)|^2 \dd k \dd y, \label{H-mn-coh}
\end{gather}
where we put $d_{3}=d^{m^{2}}_1d^{n^{2}}_{2}$.

Using representation \eqref{H-mn-coh},
we can now prove that the spectrum of $H_{m,n}$ is discrete. 

\begin{proposition}\label{prop:Hmnspec}
The operator $H_{m,n}$ satisfies $H_{m,n}> cI$, where the constant $c>0$ depends on $m,n\in\N$, and has purely  discrete spectrum consisting of finite multiplicity positive eigenvalues tending to infinity.
\end{proposition}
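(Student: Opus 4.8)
The plan is to mimic the proof of Proposition \ref{prop:Hspec}, using the coherent state representation \eqref{H-mn-coh} in place of the Fourier representation that was available for $H_0$. The key point is that the symbol appearing in \eqref{H-mn-coh},
\begin{align*}
\Phi(k,y)=d_1\e^{-2\pi bk}+d_2\e^{2\pi by}+d_3\e^{2\pi b(mk-ny)},
\end{align*}
is a sum of three exponentials whose exponents $(-1,0)$, $(0,1)$ and $(m,-n)$, as vectors in $\R^2$, positively span $\R^2$ (their positive hull is all of $\R^2$ when $m,n\ge 1$). Consequently $\Phi(k,y)\to+\infty$ as $|(k,y)|\to\infty$ in every direction, and $\Phi$ attains a positive minimum $c_0>0$ on $\R^2$. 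First I would record this elementary fact about $\Phi$: for each direction there is at least one of the three exponents with strictly positive inner product, so along any ray $\Phi$ grows at least exponentially; compactness of the unit circle then gives a uniform lower bound and coercivity. This immediately yields $\sclp{H_{m,n}\psi,\psi}\ge c_0\norm[2]{\psi}^2$ from \eqref{H-mn-coh} together with \eqref{eq:coherentnorm}, establishing $H_{m,n}>cI$ with $c=c_0$ (any $0<c<c_0$, or $c=c_0$ if one argues the infimum is not attained in the spectrum).

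For discreteness I would run the Birman--Schwinger argument. Write $H_{m,n}=A+B$ where $A=U$ (say), which is positive with $\sclp{A\psi,\psi}=d_1\iint \e^{-2\pi bk}|\widetilde\psi(k,y)|^2\dd k\dd y$ by the representation just derived, and $B=V+q^{-mn}U^{-m}V^{-n}\ge 0$ has coherent-state symbol $d_2\e^{2\pi by}+d_3\e^{2\pi b(mk-ny)}$. For $\lambda>0$, by the variational principle and Birman--Schwinger,
\begin{align*}
&\dim\set{\psi}{\sclp{H_{m,n}\psi,\psi}<\lambda\norm[2]{\psi}^2}\\
&\quad\le\dim\set{\psi}{\sclp{A\psi,\psi}-\big(\lambda\norm[2]{\psi}^2-\sclp{B\psi,\psi}\big)_+<0}\\
&\quad=\dim\set{\psi}{\sclp{R_\lambda A^{-1}R_\lambda\psi,\psi}>1},
\end{align*}
with $R_\lambda=\sqrt{(\lambda-B)_+}$ understood via the functional calculus. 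The operator $A^{-1}$ is bounded (since $\sigma(A)=\sigma(U)\subset(0,\infty)$ is bounded below — indeed $U=\e^{-bP}$ has spectrum $(0,\infty)$, so one should instead take $A=U+U^{-1}$ or simply $A=\tfrac12(U+B')$ for a cleverer split; more robustly, take $A$ to be the part of $H_{m,n}$ carrying, after a linear change of the coherent-state variables $(k,y)$, a symbol comparable to $\cosh$ in one variable so that $A^{-1}$ is manifestly bounded, exactly as $H_0^{-1}$ was in Proposition \ref{prop:Hspec}). The cleanest route: choose coordinates in which one pair of the exponents becomes $\pm$(one coordinate), set that hyperbolic-cosine piece as $A$, so $A^{-1}$ bounded, and the remaining symbol $L(k,y)\to+\infty$ plays the role of $L$ in Proposition \ref{prop:Hspec}.

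With such a split, $K_\lambda=R_\lambda A^{-1}R_\lambda$ is, via the coherent state transform, unitarily equivalent to multiplication by $\sqrt{(\lambda-L)_+}$ times a bounded convolution-type kernel times $\sqrt{(\lambda-L)_+}$; since $L\to+\infty$ the function $(\lambda-L)_+$ has compact support, so $R_\lambda$ is a bounded operator of multiplication by a compactly supported bounded function, and $K_\lambda$ is compact (Hilbert--Schmidt, as its kernel is square-integrable after restricting to the compact support). Hence $K_\lambda$ has only finitely many eigenvalues above $1$, so $H_{m,n}$ has only finitely many eigenvalues below $\lambda$; as $H_{m,n}$ cannot have finite rank (it is a sum of unbounded positive operators, cf. the end of Proposition \ref{prop:Hspec}), its spectrum is purely discrete with finite multiplicities and eigenvalues tending to $+\infty$. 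The main obstacle is the bookkeeping in the first step: one must choose the splitting $H_{m,n}=A+B$ so that $A^{-1}$ is genuinely bounded and the residual symbol is coercive simultaneously; the honest way is the linear change of variables on $(k,y)$ adapted to two of the three exponent vectors (possible since any two of $(-1,0),(0,1),(m,-n)$ are linearly independent), after which everything reduces verbatim to the situation of Proposition \ref{prop:Hspec}.
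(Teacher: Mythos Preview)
Your coercivity argument for the symbol is correct: the three exponent vectors $(-1,0)$, $(0,1)$, $(m,-n)$ positively span $\R^2$, so the symbol tends to $+\infty$ at infinity and has a positive minimum, and \eqref{H-mn-coh} together with \eqref{eq:coherentnorm} gives $H_{m,n}\ge cI$. This part is fine and matches the paper (which does the same thing by an explicit quadrant analysis).

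The discreteness argument, however, has a genuine gap. The coherent state transform $\psi\mapsto\widetilde\psi$ is an isometry of $L^2(\R)$ into $L^2(\R^2)$ but is \emph{not} surjective, so \eqref{H-mn-coh} is not a unitary equivalence and you cannot run Birman--Schwinger ``in the $(k,y)$ variables'' as you propose; in particular $K_\lambda$ is not unitarily equivalent to the multiplication--convolution--multiplication operator you describe. A linear change of the variables $(k,y)$ does not fix this, and in any case your proposed split fails at the symbol level: once two of the three exponentials are packaged into a $\cosh$-type piece $A$, the \emph{single} remaining exponential is not coercive (it decays on a half-plane), so it cannot play the role of the potential $L\to+\infty$ in Proposition~\ref{prop:Hspec}.

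The paper avoids both issues by comparing symbols rather than splitting the operator. One shows by a case analysis over the four quadrants that the three-exponential symbol $\Psi(k,y)$ in \eqref{H-mn-coh} dominates an \emph{additively separable} four-exponential symbol
\[
\Phi(k,y)=c_1\bigl(\e^{c_2 k}+\e^{-c_2 k}+\e^{c_2 y}+\e^{-c_2 y}\bigr).
\]
Because each term $\e^{\pm c_2 k}$, $\e^{\pm c_2 y}$ is (up to an explicit constant, exactly as in Sect.~\ref{subsec:coherent} and \ref{sec:coHmn}) the coherent-state representation of an honest operator $\e^{\alpha P}$ or $\e^{\beta Q}$ on $L^2(\R)$, the quadratic form $\psi\mapsto\iint\Phi\,|\widetilde\psi|^2$ is the quadratic form of a genuine operator $A$ on $L^2(\R)$ of the type ``$\cosh$ of $P$ plus $\cosh$ of $Q$'' covered by Proposition~\ref{prop:Hspec}. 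Hence $A$ has discrete spectrum, $H_{m,n}\ge A$ as quadratic forms, and the min-max principle transfers discreteness to $H_{m,n}$. The idea you were missing is that the comparison should produce a separable lower bound on the symbol, which then pulls back to an operator on $L^2(\R)$; one never needs to invert the coherent state transform or work directly in $L^2(\R^2)$.
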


\begin{proof}
According to \eqref{H-mn-coh}, the quadratic form of the operator $H_{m,n}$ is
\begin{align*}
\sclp{H_{m,n} \psi,\psi} =\iint_{\R^{2}} \Psi(k,y) |\widetilde\psi (k,y)|^2 \dd k \dd y,
\end{align*}
where 
\begin{align}\label{Psi}
\Psi(k,y) = d_1 \e^{-2\pi bk} + d_2 \e^{2\pi by} + d_3 \e^{2\pi b m k}  \e^{- 2\pi bn y}.
\end{align}
If $k\le 0$ and $y\ge0$, then omitting the last term in \eqref{Psi} we obtain
\begin{align*}
\Psi(k,y) \ge \frac{d_1}{2} (\e^{-\pi bk} + \e^{\pi bk} ) + \frac{d_2}{2} (\e^{\pi by} + \e^{-\pi by}) . 
\end{align*}
If $k\ge0$ and $y\le0$, then 
\begin{align*}
\e^{2\pi b m k}  \e^{- 2\pi bn y} \ge \frac12 \left( \e^{2\pi b m k}  +  \e^{- 2\pi bn y}\right)
\end{align*}
and therefore 
\begin{align*}
\Psi(k,y) \ge  d_1 \e^{-2\pi bk} + d_2 \e^{2\pi by} + \frac{d_3}{2} \left( \e^{2\pi b m k}  +  \e^{- 2\pi bn y}\right).
\end{align*}
Consider the case $k\ge 0$, $y\ge 0$. Assume that $\beta mk\ge ny$, where $\beta<1$. Then
\begin{align*}
\e^{2\pi b m k}  \e^{- 2\pi bn y} \ge \e^{2\pi b m (1-\beta) k}.
\end{align*}
If now $k\ge 0$, $y\ge 0$ and $\beta mk \le  ny$, then we omit the last term in \eqref{Psi} and use 
\begin{align*}
\e^{2\pi b y}\ge \frac12\left(\e^{2\pi b y}+\e^{\beta m n^{-1} k}\right).
\end{align*}
Similarly we treat the case $k\le 0$, $y\le0$.
Finally we conclude that there are positive constants $c_{1}$ and $c_{2}$ such that 
\begin{align}\label{ineq}
\Psi(k,y) > \Phi(k,y) := c_{1}\left(\e^{-c_{2}k} + \e^{c_{2}k}  + \e^{-c_{2}y}  + \e^{c_{2}y}\right).
\end{align}
Denote by $A$ the operator defined by the quadratic form 
\begin{align*}
\sclp{A\psi, \psi} := \iint_{\R^2} \Phi(k,y) |\widetilde\psi(k,y)|^2\dd k\dd y.
\end{align*}
Then \eqref{ineq} implies $H_{m,n} > A$ and it follows from the Plancherel theorem that $A\geq cI$, where $c=4c_{1}$. Obviously due to Proposition \ref{prop:Hspec}  the spectrum of $A$ is discrete. By the min-max principle we can conclude that the same holds for the spectrum of $H_{m,n}$ and the proof is complete. 
\end{proof}

\subsection{Deriving an Upper Bound}\label{subsec:upperG} Repeating the computation in Sect.~\ref{subsec:upper} and using Jensen's inequality we obtain
\begin{align*}
\sum_{j\ge1}(\lambda-\lambda_j)_+\le\iint_{\R^{2}}\left(\lambda-d_1\e^{-2\pi bk}-d_2\e^{2\pi b y}-d_3\e^{2\pi b(mk-ny)}\right)_+\dd k\dd y\,.
\end{align*}
To find an upper bound on the right-hand side, we separately consider all four quadrants of $\R^{2}$. 

If $k\le0, y\ge0$, an upper bound is obtained by omitting the mixed term $d_3\e^{2\pi b(mk-ny)}$. The double integral is then of the same form as the upper bound in Sect.~\ref{subsec:upper} and its leading term as $\lambda\to\infty$ is $\lambda\log^{2}\lambda/(2\pi b)^{2}$. 

If $k\ge0, y\le0$, we omit two exponentially decaying terms $d_1\e^{-2\pi b k}$ and $d_2\e^{2\pi b y}$. Changing variables $u_1=d_3\e^{2\pi bmk}$ and $u_2=\e^{-2\pi b ny}$, we obtain the double integral
\begin{align}
\frac{1}{mn(2\pi b)^{2}}\int_{d_3}^{\lambda}\int_{1}^{\lambda/u_1}\frac{\lambda-u_1u_2}{u_1 u_2}\dd u_2\dd u_1=\frac{\lambda\log^{2}\lambda}{2mn(2\pi b)^2} +O(\lambda\log\lambda)
\label{eq:int2} 
\end{align} 
as $\lambda\rightarrow\infty$, which can be easily verified by direct computation. 

In case $k\ge0,y\ge0$ we omit the term $d_1\e^{-2\pi b k}$ and changing variables $u_1=d_3\e^{2\pi b(mk-ny)}$ and $u_2=d_2\e^{2\pi by}$ yields the integral
\begin{align}
\frac{1}{m(2\pi b)^2}\int_{d_2}^{\tilde\lambda}\int_{d_4/u^{n}_2}^{\lambda-u_2}\frac{\lambda-u_1-u_2}{u_1u_2}\dd u_1\dd u_2\,,
\label{eq:int3} 
\end{align}
where $d_{4}=d_{3}d_{2}^{n}$ and $\tilde\lambda$ is the root of equation $\lambda=d_{4}u_{2}^{-n}+u_{2}$.
It is easy to see that for $\lambda\rightarrow\infty$ one can replace $\tilde\lambda$ by $\lambda$ and obtain the leading term $(n+2)\lambda\log^{2}\lambda/2m(2\pi b)^{2}$. The case  $k\le0,y\le0$ is treated similarly with the leading term $(m+2)\lambda\log^{2}\lambda/2n(2\pi b)^{2}$.

Summarizing, we arrive at the estimate
\begin{align*}
\sum_{j\ge1}(\lambda-\lambda_j)_+\le \frac{c_{m,n}}{(2\pi b)^2}\lambda\log^{2}\lambda+O(\lambda\log\lambda)\,.
\end{align*}

\subsection{Deriving a Lower Bound}\label{subsec:lowerG}
To derive a lower bound we repeat computations in Sect.~\ref{subsec:lower}. Denoting the projection-valued measure of $H_{m,n}$ on $[0,\infty)$ by $\mathrm{d}F_{\mu}$, we obtain upon an application of Jensen's inequality that
\begin{align*}
\sum_{j\ge1}(\lambda-\lambda_j)_+ 
\ge \iint_{\R^{2}}\left(\lambda-\int_0^\infty \mu\sclp[2]{\mathrm{d} F(\mu) e_{k,y},e_{k,y}}\right)_+\dd k\dd y
\end{align*}
for $\lambda\ge 0$.  The spectral theorem implies that
\begin{gather*}
\int_0^\infty \mu\sclp[2]{\mathrm{d} F(\mu) e_{k,y},e_{k,y}}
=\sclp[2]{H_{m,n}e_{k,y},e_{k,y}}\\
=\sclp[2]{Ue_{k,y},e_{k,y}}+\sclp[2]{Ve_{k,y},e_{k,y}}+q^{-mn}\sclp[2]{U^{-m}V^{-n}e_{k,y},e_{k,y}}\,.
\end{gather*}
The three inner products on the right-hand side can be computed explicitly and we get the inequality
\begin{align*}
\sum_{j\ge1}(\lambda-\lambda_j)_+ 
\ge \iint_{\R^{2}}\left(\lambda-\frac{1}{d_1}\e^{-2\pi b k}-\frac{1}{d_2}\e^{2\pi b y}-\frac{1}{d_3}\e^{2\pi b(mk-ny)}\right)_+\dd k\dd y\,.
\end{align*} 
To obtain a lower bound on the right-hand side, we again consider  separately all four quadrants of $\R^{2}$. 

If $k\le0, y\ge0$ we make the integrand smaller by replacing $\e^{2\pi b(mk-ny)}$ with $\e^{2\pi b y}$. The resulting double integral is of the form discussed in Sect.~\ref{subsec:upper} and its leading term as $\lambda\to\infty$ is $\lambda\log^{2}\lambda^2/(2 \pi b)^{2}$. 

In case $k\ge0,y\le0$, we decrease the right-hand side by replacing both $\e^{-2\pi b k}$ and $\e^{2 \pi b y}$ with $\e^{2\pi b (mk-ny)}$. This yields a double integral of the same form as \eqref{eq:int2} with the leading term $\lambda\log^{2}\lambda/2mn(2\pi b)^{2}$. 

For $k\ge0, y\ge0$ we bound $\e^{-2\pi b k}$ from above by $1$. The integral takes the same form as \eqref{eq:int3} with $\lambda$ replaced by $\lambda-1/d_1$. This does not affect the asymptotical behaviour $(n+2)\lambda\log^{2} \lambda/2m(2\pi b)^{2}$ as $\lambda\to\infty$.

The last case, $k\le 0,y\le0$, yields the leading term $(m+2)\lambda\log^{2} \lambda/2n(2\pi b)^{2}$ and we conclude that
\begin{align*}
\sum_{j\ge1}(\lambda-\lambda_j)_+\ge \frac{c_{m,n}}{(2\pi b)^2}\lambda\log^{2}\lambda+O(\lambda\log\lambda)\,.
\end{align*}
The proof of Theorem \ref{th:Gasymp} is complete.
\bibliographystyle{amsplain}
\bibliography{biblio_weyl}
\end{document}